\DeclareMathOperator{\conv}{\mathrm{conv}}
\DeclareMathOperator{\diag}{\mathrm{diag}}
\DeclareMathOperator{\coker}{\mathrm{coker}}
\DeclareMathOperator{\Homo}{\mathrm{Hom}}
\DeclareMathOperator{\sur}{\mathrm{Sur}}
\DeclareMathOperator{\cs}{\mathrm{C}^*}
\DeclareMathOperator{\mc}{\mathrm{MinCos}_q}
\DeclareMathOperator{\js}{\mathcal{Z}}
\DeclareMathOperator{\Aut}{\mathrm{Aut}}
\DeclareMathOperator{\aff}{\mathrm{Aff}}
\DeclareMathOperator{\rank}{\mathrm{rank}~}
\DeclareMathOperator{\Span}{\mathrm{span}}
\newcommand{\nn}{\mathbb{N}}
\newcommand{\zz}{\mathbb{Z}}
\newcommand{\rr}{\mathbb{R}}
\newcommand{\cc}{\mathbb{C}}
\newcommand{\pp}{\mathbb{P}}
\newcommand{\ee}{\mathbb{E}}
\newcommand{\g}{\mathbb{G}}
\newtheorem {theorem}{Theorem}[section]
\newtheorem {proposition}[theorem]{Proposition}
\theoremstyle {definition}
\newtheorem {example}[theorem]{Example}
\newtheorem {remark}[theorem]{Remark}
\numberwithin{equation}{section}
\title{Random amenable $\cs$-algebras}
\author[B.~Jacelon]{Bhishan Jacelon}
\address[B.~Jacelon]{
Institute of Mathematics of the Czech Academy of Sciences\\ \v{Z}itn\'{a} 25\\115 67 Prague 1\\Czech Republic}
\email{bjacelon@gmail.com}
\subjclass[2020]{46L05, 46L35, 05C80, 05C81}
\keywords{$\mathcal{Z}$-stable $\cs$-algebras, random structures}
\begin{document}

\begin{abstract}
What is the probability that a random UHF algebra is of infinite type? What is the probability that a random simple AI algebra has at most $k$ extremal traces? What is the expected value of the radius of comparison of a random Villadsen-type AH algebra? What is the probability that such an algebra is $\mathcal{Z}$-stable? What is the probability that a random Cuntz--Krieger algebra is purely infinite and simple, and what can be said about the distribution of its $K$-theory? By constructing $\cs$-algebras associated with suitable random (walks on) graphs, we provide context in which these are meaningful questions with computable answers.
\end{abstract}

\maketitle

\section{Introduction} \label{section:intro}

Inductive limit constructions lie at the very heart of operator algebras. Beginning with Murray and von Neumann's analysis of hyperfinite factors \cite{Murray:1943le}, continuing with Glimm's description of their $\cs$-analogues, the uniformly hyperfinite (UHF) algebras \cite{Glimm:1960qh}, and later with Elliott's pioneering work on approximately finite-dimensional (AF) algebras \cite{Elliott:1976kq}, and beyond, not only have inductive limits of type $\rm I$ algebras featured consistently and prominently in the natural $\cs$-world, but their classification has been a perennial pursuit of $\cs$-taxonomists.

In fact (see \cite[Theorem 13.50]{Gong:2020ud}), \emph{every} (unital) simple, separable $\cs$-algebra that has finite decomposition rank and satisfies the UCT (that is, every stably finite classifiable $\cs$-algebra) is isomorphic to an ASH algebra, that is, to an inductive limit of subhomogeneous $\cs$-algebras. Even on the other branch of the classifiable tree, purely infinite $\cs$-algebras can be built from limits of circle algebras by taking crossed products by trace-scaling endomorphisms (see \cite[Proposition 4.3.3]{Rordam:2002yu}). The known examples \cite{Villadsen:1998ys, Rordam:2003rz, Toms:2008vn, Toms:2008hl} of amenable $\cs$-algebras exhibiting Elliott invariants with more exotic behaviour also arise from inductive limits.

In this article, we address the question: what can be said about the distributions of the random $\cs$-algebras obtained by viewing these constructions as probabilistic rather than deterministic? To make sense of this, and to answer the questions posed in the abstract, we outline specific procedures for probabilistic model building and examine the distributions of suitable invariants. In particular, we will:
\begin{enumerate}[(i)]
\item as a warmup example, use random walks on the prime numbers to build UHF algebras of random type (see \S~\ref{section:uhf});
\item similarly build simple inductive limits of prime dimension drop algebras with random tracial state spaces, guided by random walks on the natural numbers (see \S~\ref{section:ai});
\item use random walks on binary trees to evolve Villadsen algebras whose radii of comparison are random variables with tractable distributions (see \S~\ref{section:ah});
\item investigate the distribution of the $K$-theory of graph $\cs$-algebras associated to random regular multigraphs (see \S~\ref{section:pi}).
\end{enumerate}

There are two sources of context for this line of investigation. First, our use of classifying invariants to develop various classes of $\cs$-algebras into probability spaces is telegraphed by the Borel parameterisations established in \cite{Farah:2013td,Farah:2014vt}. Second, in work that builds on the Erd\"{o}s-R\'{e}nyi construction of the random graph \cite{Erdos:1963vt}, it is shown in \cite{Droste:2003uh} how to probabilistically construct ($\omega$-categorical) universal homogeneous relational structures (briefly, one decides by coin toss whether or not a given tuple of generators satisfies a relation). What unites the universal UHF algebra $\mathcal{Q}$, the Jiang--Su algebra $\js$ and the stably projectionless algebra $\mathcal{W}$, apart from (strong) self-absorption, is that they are the Fra\"{i}ss\'{e} limits of suitable categories of tracial $\cs$-algebras (see \cite{Eagle:2016ww,Masumoto:2017wx,Jacelon:2021uc}). In other words, they are the generic objects that can be built as limits from their associated Fra\"{i}ss\'{e} classes. While for the most part we have found it more natural to work with the inductive limit structure inherent to $\mathcal{Q}$, $\js$, $\mathcal{W}$ and other $\cs$-algebras of interest, and consequently to appeal to the theory of Markov chains \S~\ref{section:mc}, the model employed in \S~\ref{section:pi} may be thought of as closer in spirit to the relational approach.

Computations of specific probabilities $\pp$ and expectations $\ee$ are included alongside our random constructions, but the reader should not attach much significance to, say, the assertion that the probability that a random simple inductive limit of prime dimension drop algebras has at most $k$ extremal traces is $\sum\limits_{i=0}^k \frac{k-i}{(k+1)2^{i+1}}$ (see Remark~\ref{remark:simplex}). Rather, the point being illustrated is that, with the right framework, such calculations are indeed possible. Our suggestion is that, whether via random walks or the creation of random pairings, (multi)graphs provide a natural and manageable way of probabilistically generating large classes of interesting $\cs$-algebras. And the take-home message should be that graph properties like recurrence or transience of random walks translate to almost-sure predictions about the structure of the associated random $\cs$-algebras (for example, a simple symmetric walk on the natural numbers almost surely generates the universal UHF algebra).

\subsection*{Acknowledgements} This research was supported by the GA\v{C}R project 22-07833K and RVO: 67985840. It germinated in conversations with Ali Asadi--Vasfi, Tristan Bice, Karen Strung and other members of the Prague NCG\&T group. I am grateful for the kind hospitality of my colleagues Dr.~Dane Mejias in Seattle and Dr.~Holly Morgan in Halifax during a visit to North America in the summer of 2022, when much of the article was completed.

\section{Markov chains} \label{section:mc}

The random constructions in \S~\ref{section:uhf} and \S~\ref{section:ai} are based on the following continuous-time Markov chain (specifically, a birth-and-death process). For an introduction to Markov chains, see \cite{Norris:1998wc}.

\subsection{The model} \label{subsection:mcmodel}

The random variable $X_0$ is specified by a probability distribution $\pi$ on $\nn=\{0,1,2,\dots\}$. At time $t\ge0$, the value of $X_t$ is governed by the rate matrix
\begin{equation} \label{eqn:rate}
Q = \begin{pmatrix} -(\lambda_0+\mu_0) & \lambda_0 &  & & \\ \mu_1 & -(\lambda_1+\mu_1) & \lambda_1 &  &  & \\  & \mu_2 & -(\lambda_2+\mu_2) & \lambda_2 & & \\ & & \ddots & \ddots & \ddots \end{pmatrix},
\end{equation}
where $\mu_i >0$ for $i\ge1$ and $\lambda_i >0$ for $i\ge0$. It was shown in \cite[Theorems 14,15]{Karlin:1957uz} (see also the introduction of \cite{Karlin:1957tf}) that, if the sequences
\[
\alpha_n:=\prod\limits_{i=1}^{n}\frac{\lambda_{i-1}}{\mu_i} \quad,\quad \beta_n:=\prod\limits_{i=1}^{n}\frac{\mu_i}{\lambda_i} \quad,\quad n\ge1
\]
are such that $\sum\limits_{n=1}^{\infty}(\alpha_n+\beta_n)=\infty$, then there exists a \emph{unique} matrix $P(t)$ associated to $Q$ that satisfies $P'(t)=QP(t)=P(t)Q$, $P(0)=I$, $P(s+t)=P(s)P(t)$, $P_{ij}(t)\ge0$ and $\sum_{j}P_{ij}(t)\le1$ (for all $s,t,i,j$ as appropriate), and that consequently determines the transition probabilities:
\[
\pp(X_{s+t}=j \mid X_{s}=i) = P_{ij}(t).
\]
While the continuous process provides important context, what we actually pay attention to is its jump chain, that is, the discrete Markov chain $(Y_n)_{n\in\nn}$ on $\nn$ whose initial distribution is $\pi$ and whose transition matrix $\Pi$ has entries
\begin{equation} \label{eqn:tm}
 \Pi_{ij} =
 \begin{cases}
 p_i:=\frac{\lambda_i}{\lambda_i+\mu_i} & \text{ if }\: j=i+1\\
 q_i:=\frac{\mu_i}{\lambda_i+\mu_i} & \text{ if }\: j=i-1\\
 0 & \text{ if }\: |i-j|>1.
 \end{cases}
\end{equation}
The jump chain is represented by the diagram
\[
	\begin{tikzpicture}[->, >=stealth', auto, semithick, node distance=3cm]
	
	\node[state][draw=black,thick,text=black,scale=1]    (A)                     {$-1$};
	\node[state][draw=black,thick,text=black,scale=1]    (B)[right of=A]   {$0$};
	\node[state][draw=black,thick,text=black,scale=1]    (C)[right of=B]   {$1$};
	\node[state][draw=black,thick,text=black,scale=1]    (D)[right of=C]   {$2$};
	\node   (E)[right of=D]   {$\cdots$};
	\path
	(B) edge[bend left,below]	node{$q_0$}	(A)
	edge[bend left,above]		node{$p_0$}	(C)
	(C) edge[bend left,below]	node{$q_1$}	(B)
	edge[bend left,above]		node{$p_1$}	(D)
	(D) edge[bend left,below]	node{$q_2$}	(C)
	edge[bend left,above]		node{$p_2$}       (E)
	(E) edge[bend left,below]	node{$q_3$}	(D);
	\end{tikzpicture}
\]
There are two possibilities for what happens at $i=0$:
\begin{enumerate}[(I)]
\item \label{item:forever} $0$ is a reflecting barrier (that is, $q_0=\mu_0=0$);
\item \label{item:retire} $-1$ is an absorbing state (that is, with probability $q_0=\frac{\mu_0}{\lambda_0+\mu_0}>0$, $Y_n$ moves from $0$ to $-1$ and stays there).
\end{enumerate}

We will use the process $(Y_n)_{n\in\nn}$ to construct a random UHF algebra in \S~\ref{section:uhf} and a random simple inductive limit of prime dimension drop algebras in \S~\ref{section:ai}. If we want a nonzero probability of ending up with a nontrivial finite structure (a finite matrix algebra or finite-dimensional tracial simplex), then we go with option (\ref{item:retire}).

\subsection{Absorption, recurrence and transience}

The behaviour of the Markov chain $(Y_n)_{n\in\nn}$ is determined by the growth of the sequences
\begin{equation} \label{eqn:growth}
a_n:=\prod\limits_{i=1}^{n}\frac{p_{i-1}}{q_i} \quad,\quad b_n:=\prod\limits_{i=1}^{n}\frac{q_i}{p_i} \quad,\quad n\ge1.
\end{equation}
Recall that every state is of one of three types:
\begin{enumerate}[(a)]
\item \emph{positive recurrent} (or \emph{ergodic}) if with probability $1$ it is visited infinitely often, the mean return time between visits being finite;
\item \emph{null recurrent} if with probability $1$ it is visited infinitely often, but with infinite mean return time between visits;
\item \emph{transient} if with probability $1$ it is visited only finitely many times.
\end{enumerate}
In Case (\ref{item:forever}), the chain is irreducible (that is, there is a nonzero probability of transitioning between any two given states), so all states are of the same type.

The following results are by now well known. See \cite[\S2]{Karlin:1959tm} for a discussion of how they can be deduced from the analysis of the continuous birth-and-death process carried out in \cite{Karlin:1957tf}.

\begin{theorem} \label{thm:forever}
In Case (\ref{item:forever}), the chain $(Y_n)_{n\in\nn}$ is:
\begin{enumerate}[(a)]
\item positive recurrent if and only if $\sum\limits_{n=1}^\infty b_n = \infty$ and $\sum\limits_{n=1}^\infty a_n < \infty$;
\item null recurrent if and only if $\sum\limits_{n=1}^\infty b_n = \infty$ and $\sum\limits_{n=1}^\infty a_n = \infty$;
\item transient if and only if $\sum\limits_{n=1}^\infty b_n < \infty$.
\end{enumerate}
\end{theorem}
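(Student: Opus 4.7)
The plan is to use the irreducibility of the chain in Case~(\ref{item:forever}) to reduce everything to an analysis at state~$0$, and then to apply two standard techniques from discrete Markov chain theory: first-step analysis of the hitting probability to distinguish recurrence from transience, and the existence (or failure) of a normalisable stationary distribution to distinguish positive from null recurrence.

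Since $p_i,q_i>0$ for $i\ge1$ and $p_0>0$, the chain is irreducible on $\nn$, so all states share the same type and it suffices to analyse the return to $0$. Let $h_i=\pp(\text{the chain ever hits }0\mid Y_0=i)$. First-step analysis gives $h_0=1$ and $h_i=p_i h_{i+1}+q_i h_{i-1}$ for $i\ge1$, which rearranges to the telescoping identity $h_{i+1}-h_i=\tfrac{q_i}{p_i}(h_i-h_{i-1})$. Iterating yields $h_{i+1}-h_i=(h_1-1)b_i$ and hence
\[
h_{n+1}=1+(h_1-1)\Bigl(1+\sum_{i=1}^n b_i\Bigr).
\]
The true hitting probabilities are the minimal nonnegative solution of this recursion. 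If $\sum b_n=\infty$, the constraint $h_{n+1}\ge0$ forces $h_1=1$, so $h_i\equiv1$ and the chain is recurrent; if $\sum b_n<\infty$, the minimum is attained at $h_1=1-1/(1+\sum_n b_n)<1$, so the chain is transient. This would establish part~(c) and identify the recurrent regime as $\sum b_n=\infty$.

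Within that regime, to separate positive from null recurrence I would seek an invariant probability measure $(\pi_n)_{n\ge0}$. A birth-death chain is reversible, so any invariant measure is determined up to scale by the detailed-balance equations $\pi_i p_i=\pi_{i+1}q_{i+1}$, which give $\pi_n=\pi_0\,a_n$ (with the convention $a_0:=1$), where $a_n$ is as in \eqref{eqn:growth}. Such a measure normalises to a probability distribution precisely when $\sum_{n\ge0}a_n<\infty$, and for an irreducible chain the existence of a stationary distribution is equivalent to positive recurrence; its failure together with recurrence gives null recurrence. This yields parts~(a) and~(b).

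The steps above are classical (see, e.g., \cite{Norris:1998wc}), so no genuinely novel obstacle appears; the two points requiring care are the identification of the minimal nonnegative solution with the actual hitting probability, and the equivalence (for irreducible chains) between positive recurrence and the existence of a normalisable stationary distribution. The proof then amounts to careful bookkeeping of the recursions defining $a_n$ and $b_n$ in \eqref{eqn:growth}.
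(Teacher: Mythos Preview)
Your argument is correct. The paper does not actually prove this theorem: it states that the result is well known and points to \cite[\S2]{Karlin:1959tm} for how it can be deduced from the spectral analysis of the continuous birth-and-death process in \cite{Karlin:1957tf}. In contrast, you give a direct, self-contained argument entirely within the discrete setting, using first-step analysis for the hitting probability of~$0$ (yielding the dichotomy on $\sum b_n$) and detailed balance to identify the invariant measure $\pi_n=\pi_0 a_n$ (yielding the dichotomy on $\sum a_n$). Your route is more elementary and avoids the detour through the continuous process, at the cost of being specific to the nearest-neighbour structure; the Karlin--McGregor approach situates the result within a broader analytic framework but is heavier machinery than the statement requires. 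Both are standard and either would be acceptable here.
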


\begin{proposition} \label{prop:infinitebar}
In Case (\ref{item:retire}), if the initial state is $i$, then the probability of eventual absorption at zero (that is, of reaching the state $-1$) is
\[
\left(\sum\limits_{n=i}^\infty c_n\right)\bigg/\left(1+\sum\limits_{n=0}^\infty c_n\right),
\]
where $c_n:=\prod\limits_{j=0}^{n}\frac{q_j}{p_j}$. Absorption is transient (that is, occurs with probability $<1$) if $\sum c_n$ converges, is almost sure (that is, occurs with probability $1$) if $\sum c_n$ diverges, and is ergodic (that is, almost sure with finite expected time) if in addition $\sum a_n$ converges.
\end{proposition}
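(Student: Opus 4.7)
The plan is a first-step analysis, carried out separately for the absorption probability and for the expected absorption time. Let $T_{-1} := \inf\{n \ge 0 : Y_n = -1\}$, $h_i := \pp(T_{-1} < \infty \mid Y_0 = i)$ and $e_i := \ee(T_{-1} \mid Y_0 = i)$. By the strong Markov property, $h_{-1} = 1$ and
\[
h_i = p_i h_{i+1} + q_i h_{i-1} \qquad (i \ge 0),
\]
with $h$ characterised (in the usual way) as the minimal nonnegative solution of this system; likewise $e_{-1} = 0$ and $e_i = 1 + p_i e_{i+1} + q_i e_{i-1}$ for $i \ge 0$, with the same minimality convention.

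Using $p_i + q_i = 1$, the first recurrence rearranges to $p_i(h_i - h_{i+1}) = q_i(h_{i-1} - h_i)$, so the consecutive differences $v_i := h_{i-1} - h_i$ satisfy $v_{i+1} = (q_i/p_i)v_i$. Iterating gives $v_i = (1-h_0) c_{i-1}$, and telescoping yields
\[
1 - h_i = (1 - h_0)\Bigl(1 + \sum_{n=0}^{i-1} c_n\Bigr).
\]
If $\sum_{n \ge 0} c_n = \infty$, the constraint $h_i \in [0,1]$ forces $h_0 = 1$, hence $h_i \equiv 1$, so absorption is almost sure. Otherwise, observing that $h_i$ is increasing in $h_0$, minimality selects the smallest admissible value, namely $1 - h_0 = \bigl(1 + \sum_{n \ge 0} c_n\bigr)^{-1}$; the stated formula for $h_i$ then drops out at once, and $h_0 < 1$ exhibits transient absorption.

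For ergodicity, assume $\sum c_n = \infty$ and repeat the procedure for $e$: the differences $w_i := e_i - e_{i-1}$ satisfy the inhomogeneous recurrence $w_{i+1} = (q_i/p_i) w_i - 1/p_i$. Dividing through by $c_i$ turns the left-hand side into a telescoping quantity $W_i := w_i/c_{i-1}$, and using the identity $1/(p_j c_j) = a_j/q_0$ produces $W_i = W_1 - q_0^{-1}\sum_{j=1}^{i-1} a_j$. Imposing nonnegativity of the minimal solution shows that $e_0$ is finite if and only if $\sum_n a_n < \infty$, in which case
\[
e_0 = \frac{1}{q_0}\Bigl(1 + \sum_{n \ge 1} a_n\Bigr) < \infty.
\]
The main obstacle is the bookkeeping for this inhomogeneous recurrence: its summability threshold is a touch more delicate than for the homogeneous one, but once the particular solution is correctly matched against $a_n$ from \eqref{eqn:growth} everything falls out cleanly. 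This is the discrete counterpart of the continuous-time analysis of \cite{Karlin:1957tf, Karlin:1959tm} referenced above.
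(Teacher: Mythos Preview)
The paper does not actually supply a proof of this proposition; it is recorded as well known with a pointer to \cite{Karlin:1957tf,Karlin:1959tm}. Your first-step analysis is correct and is precisely the method the paper \emph{does} write out for the companion Proposition~\ref{prop:finitebar}: solve the harmonic recurrence by telescoping consecutive differences, then pin down the one free constant via the boundary/minimality constraint. Your treatment of the expected absorption time is likewise sound --- the identity $1/(p_jc_j)=a_j/q_0$ checks out directly from \eqref{eqn:growth}, and under the standing hypothesis $\sum c_n=\infty$ any choice $e_0 < q_0^{-1}\sum_{n\ge0}a_n$ makes $w_i$ eventually negative and hence $e_i\to-\infty$, so the minimal nonnegative solution has $e_0=q_0^{-1}\bigl(1+\sum_{n\ge1}a_n\bigr)$, finite exactly when $\sum a_n<\infty$.
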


Notice that, by shifting the problem one unit to the right, Proposition~\ref{prop:infinitebar} implies that in Case (\ref{item:forever}), the probability of never reaching $0$ from the initial state $i$ is
\[
\left(\sum\limits_{n=0}^{i-1} b_n\right)\bigg/\left(\sum\limits_{n=0}^\infty b_n\right)
\]
(where $b_0:=1$). The computation of this probability appears at least as far back as \cite[Theorem 2a]{Harris:1952ur}.

We might also be interested in the largest value attained before absorption. As a reminder of the flavour of some of the arguments involved, we will provide the easy proof of the following (which is really the same as \cite[Theorem 2b]{Harris:1952ur}).

\begin{proposition} \label{prop:finitebar}
In Case (\ref{item:retire}), the probability $h_{k,i}$ that, starting at $i\in I = \nn$, the maximum attained value is at most $k\ge1$ is
\[
h_{k,i} =
 \begin{cases}
 0 & \text{ if }\: i\ge k+1\\
 \left(\sum\limits_{n=i}^{k}c_n\right)\bigg/\left(1+\sum\limits_{n=0}^{k}c_n\right) & \text{ if }\: 0\le i\le k,
 \end{cases}
\]
where $c_n:=\prod\limits_{j=0}^{n}\frac{q_j}{p_j}$.
\end{proposition}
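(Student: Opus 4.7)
The plan is to reduce to a finite gambler's-ruin computation by turning both $-1$ and $k+1$ into absorbing boundaries. The event that the maximum value attained is at most $k$ coincides with the event of hitting $-1$ before ever visiting $k+1$; so once I check that this exit from $\{0,1,\dots,k\}$ happens almost surely (immediate, since from any interior state there is a uniformly positive probability of reaching $-1$ within at most $k+1$ steps), I can identify $h_{k,i}$ with the hitting probability of $-1$ in the chain stopped upon entering $\{-1,k+1\}$, subject to the boundary values $h_{k,-1}=1$ and $h_{k,k+1}=0$. The case $i\ge k+1$ of the formula is then trivial.

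Next I would condition on the first jump to obtain the harmonic equation
\[
h_{k,i} = p_i\, h_{k,i+1} + q_i\, h_{k,i-1},\qquad 0\le i\le k,
\]
and use $p_i+q_i=1$ to rewrite this as $p_i(h_{k,i+1}-h_{k,i}) = q_i(h_{k,i}-h_{k,i-1})$. Setting $g_i := h_{k,i+1}-h_{k,i}$ then gives the one-step recursion $g_i = (q_i/p_i)\, g_{i-1}$, which iterates to $g_i = c_i\, g_{-1}$ for every $i\ge 0$, reducing the problem to determining the single unknown $g_{-1}$.

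Finally I would pin down $g_{-1}$ by telescoping: since $g_{-1}+g_0+\dots+g_k = h_{k,k+1}-h_{k,-1} = -1$, one finds $g_{-1} = -\bigl(1+\sum_{n=0}^{k}c_n\bigr)^{-1}$, and telescoping from $i$ to $k$ then yields $h_{k,i} = -g_{-1}\sum_{n=i}^{k}c_n$, which is the stated formula. No serious obstacle is expected; the argument is the standard first-step analysis, and the only mild point to check is the almost-sure exit from $\{0,\dots,k\}$ noted above, which is what legitimises imposing $h_{k,k+1}=0$ as a genuine boundary condition.
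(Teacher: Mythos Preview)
Your proof is correct and follows essentially the same route as the paper: first-step (harmonic) equation on $\{0,\dots,k\}$, a one-step recursion for the successive differences yielding $g_i=c_i g_{-1}$, and a telescoping determination of the free constant from the two boundary values. The only cosmetic difference is that the paper works with the complementary quantity $h'_{k,i}=1-h_{k,i}$ (the probability of hitting $\{k+1,k+2,\dots\}$) and cites the minimal-solution characterisation of hitting probabilities, whereas you argue almost-sure exit directly; the computations are otherwise identical up to a sign and an index shift.
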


\begin{proof}
The event in question is the complement of the event that, starting at $i$, the chain ever hits the set $A=\{k+1,k+2,\dots\}$. Let us write $h'_{k,i}$ for the probability of this latter event (that is, $h'_{k,i}=1-h_{k,i}$). By an application of the Markov property (see \cite[Theorem 1.3.2]{Norris:1998wc}), $(h'_{k,i})_{i\in I}$ is the minimal non-negative solution to
\begin{equation} \label{eqn:hitting}
h'_{k,i} =
 \begin{cases}
 1 & \text{ if }\: i\in A\\
 \sum\limits_{j\in I}\Pi_{ij}h'_{k,j} & \text{ if }\: i\notin A.
 \end{cases}
\end{equation}
Since $-1$ is an absorbing state, we also know that $h'_{k,-1}=0$. So far, we know that $h_{k,-1}=1$ and $h_{k,i}=0$ for $i\ge k+1$. For $0\le i\le k$, we have from (\ref{eqn:hitting}) that
\[
h'_{k,i} = p_ih'_{k,i+1}+q_ih'_{k,i-1}.
\]
Writing $u_{i}=h'_{k,i}-h'_{k,i-1}$ for $0\le i\le k+1$, this gives
\[
u_{i}=\frac{q_{i-1}}{p_{i-1}}u_{i-1}=\frac{q_{i-1}q_{i-2}}{p_{i-1}p_{i-2}}u_{i-2}=\dots=c_{i-1}u_0
\]
(with $c_{-1}:=1$), which implies that
\[
h'_{k,i}=(u_0+\dots+u_i)+h'_{k,-1}=u_0\left(1+\sum\limits_{n=0}^{i-1}c_n\right).
\]
Since $h'_{k,k+1}=1$, it follows that $u_0=\left(1+\sum\limits_{n=0}^{k}c_n\right)^{-1}$. Finally, this gives
\[
h_{k,i}=1-h'_{k,i}=\left(\sum\limits_{n=i}^{k}c_n\right)\bigg/\left(1+\sum\limits_{n=0}^{k}c_n\right)
\]
for $0\le i\le k$.
\end{proof}

\begin{example} \label{ex:drunk}
A case of particular interest is that of constant transition probabilities (that is, for all $i\ge1$, $p_i=p=1-q=1-q_i$). We can interpret this as a one-dimensional drunkard's walk: a drunkard stumbles along a semi-infinite street of constant slope, walking one block to the right with probability $p$ and one block to the left with probability $q$ (with $p<q$, $p=q$ or $p>q$ depending on the sign of the slope). His home is at the origin and the bar is at position $k+1\in\{1,2,\dots\}\cup\{\infty\}$. In Case (\ref{item:forever}), even if he makes it home, he refuses to stay there. By Theorem~\ref{thm:forever}, with probability $1$, these visits home occur:
\begin{enumerate}[(a)]
\item infinitely often with finite mean return time if $p<q$;
\item infinitely often with infinite mean return time if $p=q$;
\item at most finitely often if $p>q$ (in which case, the probability that he starts at $i$ and never reaches home is $1-\left(\frac{q}{p}\right)^i$).
\end{enumerate}
In Case (\ref{item:retire}), if he makes it to his front door (at $0$) he can be persuaded, with probability $q_0>0$ (which for simplicity we assume is equal to $q$), to go inside to bed (at $-1$). On the other hand, if he makes it to the bar then he will never leave. By Proposition~\ref{prop:infinitebar} and Proposition~\ref{prop:finitebar}, the event that, starting at $i$, he eventually goes to bed:
\begin{enumerate}[(a)]
\item (if the bar is at $\infty$) is almost sure if $p\le q$ (with finite expected time if $p<q$ and infinite expected time if $p=q$), and has probability 
\[
\left(\frac{q_0}{p_0}\left(\frac{q}{p}\right)^i\right)\bigg/\left(1-\frac{q}{p}+\frac{q_0}{p_0}\right) = \left(\frac{q}{p}\right)^{i+1}
\]
if $p>q$;
\item (if the bar is at $k+1\in\{i+1,i+2,\dots\}\subseteq\nn$) has probability \[\frac{k+1-i}{k+2}\] if $p=q=q_0$ (and is given by a less appealing expression if $p\ne q$).
\end{enumerate}
\end{example}

\section{UHF algebras} \label{section:uhf}

\subsection{The construction}

Let $m_{-1}=m_0=1$ and let $(m_n)_{n=1}^\infty$ be an enumeration of the primes. Recall that $\Pi=(\Pi_{ij})_{i,j\in\nn\cup\{-1\}}$ is the transition matrix of the Markov chain $(Y_n)_{n\in\nn}$ described in \S~\ref{subsection:mcmodel} (that is, $\Pi_{ij}=\pp(Y_{n+1}=j \mid Y_n=i$)) and $\pi=(\pi_i)_{i\in\nn}$ is its initial distribution (that is, $\pi_i=\pp(Y_0=i)$). In this section, we construct a UHF algebra
\[
M(\Pi,\pi) = M_{m_{Y_0}} \otimes M_{m_{Y_1}} \otimes M_{m_{Y_2}} \otimes \dots.
\]
In Case (\ref{item:forever}) we always obtain an infinite-dimensional $\cs$-algebra, while in Case (\ref{item:retire}) it is possible to end up with a finite matrix algebra.

\subsection{Probabilities}

The following is immediate from Theorem~\ref{thm:forever}, Proposition~\ref{prop:infinitebar}, Proposition~\ref{prop:finitebar} and Example~\ref{ex:drunk}.

\begin{theorem} \label{thm:uhf}~
\begin{enumerate}[(1)]
\item If $q_0=0$, then with probability $1$:
\begin{enumerate}[(a)]
\item $M(\Pi,\pi)$ is isomorphic to the universal UHF algebra $\mathcal{Q}$ if $\sum\limits_{n=1}^\infty\prod\limits_{i=1}^{n}\frac{q_i}{p_i}=\infty$, in particular if $p_i=p\le q=q_i$ for all $i\ge 1$;
\item $M(\Pi,\pi)$ is of `finite type', that is, every prime factor of the supernatural number associated to $M(\Pi,\pi)$ has finite multiplicity, if $\sum\limits_{n=1}^\infty\prod\limits_{i=1}^{n}\frac{q_i}{p_i}<\infty$, in particular if $p_i=p > q=q_i$ for all $i\ge 1$.
\end{enumerate}
\item If $q_0>0$, then the probability that:
\begin{enumerate}[(a)]
\item $M(\Pi,\pi)$ is finite dimensional is $1$ if $\sum\limits_{n=0}^\infty \prod\limits_{j=0}^{n}\frac{q_j}{p_j}=\infty$, in particular if $p_i=p\le q=q_i$ for all $i\ge 1$, and otherwise is
\[
\left(\sum\limits_{i=0}^\infty\pi_i\sum\limits_{n=i}^\infty \prod\limits_{j=0}^{n}\frac{q_j}{p_j}\right)\bigg/\left(1+\sum\limits_{n=0}^\infty \prod\limits_{j=0}^{n}\frac{q_j}{p_j}\right),
\]
which simplifies to
\[
\sum\limits_{i=0}^\infty\pi_i\left(\frac{q}{p}\right)^{i+1}
\]
if $p_i=p> q=q_i$ for all $i\ge 0$;
\item $M(\Pi,\pi)$ is isomorphic to $M_N$, with the highest prime factor of $N$ at most $m_k$, is
\[
\left(\sum\limits_{i=0}^k\pi_i\sum\limits_{n=i}^{k}\prod\limits_{j=0}^{n}\frac{q_j}{p_j}\right)\bigg/\left(1+\sum\limits_{n=0}^{k}\prod\limits_{j=0}^{n}\frac{q_j}{p_j}\right),
\]
which simplifies to
\[
\sum\limits_{i=0}^k\pi_i\frac{k+1-i}{k+2}
\]
if $p_i=p = q=q_i$ for all $i\ge 0$.
\end{enumerate}
\end{enumerate}
\end{theorem}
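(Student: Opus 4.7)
The plan is to translate structural properties of the UHF algebra $M(\Pi,\pi)$ into trajectory properties of the Markov chain $(Y_n)_{n\in\nn}$, and then directly invoke the earlier results. The starting point is Glimm's classification of UHF algebras by supernatural numbers: the supernatural number of $M(\Pi,\pi) = \bigotimes_n M_{m_{Y_n}}$ has, for each prime $p$, $p$-adic valuation equal to $\#\{n : m_{Y_n} = p\}$. Since $m_{-1} = m_0 = 1$ and the $m_k$ for $k \geq 1$ enumerate the primes, this yields the dictionary: $M(\Pi,\pi) \cong \mathcal{Q}$ iff every state $k \geq 1$ is visited infinitely often; $M(\Pi,\pi)$ is of finite type iff every state $k \geq 1$ is visited only finitely often; and $M(\Pi,\pi)$ is finite-dimensional iff the chain is eventually absorbed at $-1$, in which case $M(\Pi,\pi) \cong M_N$ with $N = \prod_n m_{Y_n}$, whose largest prime factor is $m_M$ for $M = \max_n Y_n$.

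For (1), Case (\ref{item:forever}) is irreducible, so almost surely every state is visited infinitely often iff the chain is recurrent, and almost surely every state is visited only finitely often iff it is transient. Theorem~\ref{thm:forever} identifies these regimes via $\sum b_n$, and the $p \leq q$ versus $p > q$ specialisations follow from $b_n = (q/p)^n$, as already recorded in Example~\ref{ex:drunk}.

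For (2)(a), applying the dictionary, the probability that $M(\Pi,\pi)$ is finite-dimensional equals the absorption probability of the chain. Conditioning on $Y_0 = i$, Proposition~\ref{prop:infinitebar} supplies this number; summing against $\pi$ produces the stated formula. The divergence condition $\sum c_n = \infty$ giving almost sure absorption, and the $p > q$ simplification via $c_n = (q_0/p_0)(q/p)^n$, then fall out directly.

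For (2)(b), the event $\{M(\Pi,\pi) \cong M_N$ with largest prime factor of $N$ at most $m_k\}$ equals $\{\max_n Y_n \leq k\} \cap \{\text{chain absorbed}\}$, and the one real subtlety in the whole proof is to observe that the boundedness condition $\max_n Y_n \leq k$ already forces absorption up to a null set: on that event the trajectory is confined to the finite set $\{0, 1, \dots, k\}$, and either by recurrence on a finite state space or by a direct Borel--Cantelli argument using the positive probability $q_0$ of absorption at each visit to $0$, the chain must almost surely hit $-1$. Consequently the required probability coincides with the one in Proposition~\ref{prop:finitebar}, and summing against $\pi$ gives the stated expression. The symmetric case $p = q = q_0$ gives $c_n \equiv 1$, and the quoted collapse is immediate.
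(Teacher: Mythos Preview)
Your proposal is correct and follows the same approach as the paper, which simply declares the theorem ``immediate from Theorem~\ref{thm:forever}, Proposition~\ref{prop:infinitebar}, Proposition~\ref{prop:finitebar} and Example~\ref{ex:drunk}'' without further comment. You have in fact supplied more detail than the paper does: the explicit Glimm dictionary translating UHF properties into visit statistics of $(Y_n)$, and especially the observation in (2)(b) that $\{\max_n Y_n\le k\}$ differs from $\{\max_n Y_n\le k\}\cap\{\text{absorbed}\}$ only by a null set (via absorption in a finite-state chain), are both left implicit in the original.
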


\subsection{Variations}

The particular random walk we have chosen as our model is of course just one of many possibilities. We might instead for example list the primes as the elements of the grid $\zz^d$ for $d\ge1$. It is a classical theorem of P\'{o}lya \cite{Polya:1921tg} (see also \cite[Chapter \rm{XIV}.7]{Feller:1968wy} or \cite[\S1.6]{Norris:1998wc}) that a simple symmetric walk on this grid (that is, one in which it is only possible to move from a given point to one of the $2d$ neighbouring points, each with equal probability), is recurrent for $d=1$ and $d=2$, but transient for $d\ge3$.

In particular, if our random UHF algebra is constructed according to a drunken walk through an infinite flat city, we still obtain the universal UHF algebra $\mathcal{Q}$. For this reason, one might (and the author personally does) think of $\mathcal{Q}$ as \emph{the drunkard's UHF algebra}. On the other hand, to paraphrase Kakutani \cite[\S3.2]{Durrett:1996tt}, a drunk \emph{bird} (who flies around $\zz^3$) will almost surely produce a UHF algebra of finite type.

\section{Simple limits of point-line algebras} \label{section:ai}

\subsection{Background} \label{subsection:bg}

The Lazar--Lindenstrauss simplex theorem \cite[Theorem 5.2 and its Corollary]{Lazar:1971kx} says that every infinite-dimensional metrisable Choquet simplex $\Delta$ is affinely homeomorphic to a projective limit
\begin{equation} \label{eqn:geom}
\Delta \cong \varprojlim(\Delta_n,\psi_n),
\end{equation}
where for each $n\in\nn$, $\Delta_n$ is an $n$-dimensional simplex and $\psi_n\colon\Delta_{n}\to\Delta_{n-1}$ is affine and surjective. Equivalently, the space $\aff(\Delta)$ of continuous affine maps $\Delta\to\rr$ is isomorphic  to the limit
\begin{equation} \label{eqn:alg}
\aff(\Delta) \cong \varinjlim(\aff(\Delta_{n-1}),\psi_n^*) \cong \varinjlim(\rr^{n},\psi_n^*)
\end{equation}
in the category of complete order unit spaces (in which the morphisms are positive unital linear maps). Here, we identify $\aff(\Delta_{n-1})$ with $\rr^n$ via the basis $(f_{j,n})_{0\le j\le n-1}$ that is defined on the vertices $e_{0,n-1},e_{1,n-1},\dots,e_{n-1,n-1}$ of $\Delta_{n-1}$ by
\begin{equation} \label{eqn:dual}
f_{j,n}(e_{i,n})=\delta_{ij}.
\end{equation}
We will refer to these equivalent statements of the theorem as, respectively, the geometric version and the algebraic one.

Both versions have proved useful in the construction of simple inductive limit $\cs$-algebras $A=\varinjlim A_n$ with prescribed tracial state space $T(A)$. We would in particular like to highlight the approximately homogeneous (AH) cases $A_n=p_nC(X_n,M_{m_n})p_n$, where:
\begin{itemize}
\item[\cite{Goodearl:1977dq,Blackadar:1980zr}] \label{item:af} $X_n=\{1,\dots,l_n\}$ (so that $A$ is an approximately finite-dimensional (AF) algebra);
\item[\cite{Thomsen:1994qy}] \label{item:ai} $X_n=[0,1]$ (so that $A$ is an approximately interval (AI) algebra).
\end{itemize}
The AF construction uses the geometric version (\ref{eqn:geom})of Lazar--Lindenstrauss. The AI construction uses the algebraic one (\ref{eqn:alg}), together with an intertwining \cite[Lemma 3.8]{Thomsen:1994qy}
\[
\begin{tikzcd}
C_\mathbb{R}([0,1]) \arrow[r,"\varphi_1"] \arrow[d] & C_\mathbb{R}([0,1]) \arrow[r,"\varphi_2"] \arrow[d] & \dots \arrow[r] & \aff(T(A)) \arrow[d,dashed]\\
\rr^1 \arrow[r,"\psi_1^*"] \arrow[ur] & \rr^2 \arrow[r,"\psi_2^*"] \arrow[ur] & \dots \arrow[r] & \aff(\Delta)
\end{tikzcd}
\]
(where $C_\mathbb{R}([0,1])$ is identified with $\aff(T(C([0,1],M_{m_n})))$ via the embedding of $C([0,1])$ into the centre of $C([0,1],M_{m_n})$) and a suitable Krein--Milman theorem \cite[Theorem 2.1]{Thomsen:1994qy} to approximate each positive unital linear map $\varphi_n$ by an average of maps induced by $^*$-homomorphisms.

Simple AI algebras are particularly noteworthy for demonstrating the necessity for a classifying invariant for simple amenable $\cs$-algebras to include not just traces and ordered $K$-theory but also the pairing between the two (see \cite[p.~29]{Rordam:2002yu}). For us though, the key point is that the construction of these AH algebras is via an \emph{algorithm} whose
\begin{itemize}
\item[\emph{input}] \label{item:seq} is a sequence of affine surjections $\psi_n\colon\Delta_n\to\Delta_{n-1}$ (or positive unital linear maps $\psi_n^*\colon\rr^n\to\rr^{n+1}$), and whose
\item[\emph{output}] is a simple $\cs$-algebra whose tracial simplex is specified by (\ref{eqn:geom}) or (\ref{eqn:alg}).
\end{itemize}
In \S~\ref{subsection:rm}, we will use `representing matrices' to turn the space of sequences (\ref{item:seq}) into a measure space in which we can compute probabilities. First though, for the sake of noise reduction we switch our attention from homogeneous to subhomogeneous building blocks.

The introduction of boundary conditions at the endpoints of the interval (obtaining what are variously referred to as \emph{point-line algebras}, \emph{Elliott-Thomsen building blocks} or \emph{one-dimensional noncommutative CW complexes}) provides access to a wider range for the $K$-theory of the limit algebra. (To exhaust the full range, in particular to account for torsion in $K_0$, one must allow for slightly higher dimensional base spaces; see \cite{Elliott:1996qe}.) That said, our interest in this section is altogether to rid ourselves of this turbulent $K$-theory, and focus on a class for which the only thing that matters is traces, namely, $\cs$-algebras built from prime dimension drop algebras
\[
Z_{x,y} = \{f\in C([0,1],M_x\otimes M_y) \mid f(0)\in M_x\otimes 1_y,\: f(1)\in 1_x\otimes M_y\}, \quad (x,y)=1.
\]
By \cite[Theorems 4.5 and 6.2]{Jiang:1999hb}, simple inductive limits of these building blocks are completely classified by the tracial simplex.  The `existence' part of the classification is still based on Thomsen's algorithm, with a suitably refined Krein--Milman theorem \cite[Theorem 2.1]{Li:1999aa}.

\subsection{Representing matrices} \label{subsection:rm}

By reordering the set $(e_{i,n})_{0\le i\le n}$ of vertices of $\Delta_n$ if necessary, we can visualise (\ref{eqn:geom}) as follows: $\Delta_{n}$ sits atop its base $\Delta_{n-1}$, and $\psi_n$ is the collapsing map that fixes the base and sends $e_{n,n}$ to $\sum\limits_{i=1}^{n}a_{i,n}e_{i-1,n-1}\in\Delta_{n-1}$. Dually, $\psi_n^*\colon\rr^{n}\cong\aff(\Delta_{n-1})\to\aff(\Delta_n)\cong\rr^{n+1}$ has matrix
\begin{equation} \label{eqn:algrep}
\begin{pmatrix}
1 & ~ & ~ & ~\\
~ & 1 & ~ & ~\\
~ & ~ & \ddots & ~\\
~ & ~ & ~ & 1\\
a_{1,n} & a_{2,n} & \dots & a_{n,n}
\end{pmatrix}.
\end{equation}
The triangular matrix $(a_{i,n})_{1\le i\le n,\,n\ge1}$ is called a \emph{representing matrix} for the simplex $\Delta$. Assigning to a representing matrix $A$ the simplex $\Delta_A$ defined by (\ref{eqn:algrep}) is a well-defined function, but is $\infty$-to-$1$: for a start, \cite[Theorem 4.7]{Lindenstrauss:1978vi} says that a sufficient condition for matrices $A$ and $B$ to represent the same simplex is that they have the same asymptotic behaviour, that is, satisfy $\sum\limits_{n=1}^\infty\sum\limits_{i=1}^n |a_{i,n}-b_{i,n}|<\infty$. The full story is more complicated, because this condition is certainly not necessary. For example, it can be shown (see \cite[p.~317 and Theorem 4.3]{Sternfeld:1980vo}) that the representing matrices for which:
\begin{equation} \label{eqn:k0}
(a)\quad a_{n,n}=1 \quad (b)\quad a_{1,n}=1 \quad (c)\quad a_{i,n}=\frac{1}{n} \quad\text{ for } n\ge1 \text{ and } 1\le i\le n,
\end{equation}
all give rise to the Bauer simplex $K$ whose extreme boundary is homeomorphic to $\{\frac{1}{n}\}_{n=1}^\infty\cup\{0\}$.

In the random constructions to follow, we use
\begin{equation}
(a_{i,j})_{i,j\ge1} \mapsto \left(\sum\limits_{i=1}^na_{in}e_{i-1,n-1}\right)_{n\ge1}
\end{equation}
to identify the set $R$ of representing matrices with $\prod\limits_{n\in\nn}\Delta_n$. Probabilities in the set $\Delta(R)$ of all metrisable Choquet simplexes (which, as in \cite{Edwards:2011vj}, can be viewed as a subset of the unit sphere of $\ell_1$ with its $w^*$-topology) are computed by pushing forward a choice of product measure $\gamma=\otimes_{n\in\nn}\gamma_n$ on $R$. We will consider three possibilities for $\gamma$:
\begin{itemize}
\item[$(K)$]\label{item:con} $\gamma_n$ the point mass at $\sum_{i=0}^{n}\frac{1}{n}e_{i,n}$;
\item[$(C)$]\label{item:can} $\gamma_n$ the uniform measure on the $n+1$ vertices of $\Delta_n$;
\item[$(P)$]\label{item:pou} $\gamma_{s_n+i}$ normalised Lebesgue measure on $\conv\{e_{0,n},\dots,e_{n-i+1,n}\}\subseteq\Delta_n\subseteq\Delta_{s_{n}+i}$ for $1\le i\le n+1$, where $s_n=0+1+\dots+n=\frac{n(n+1)}{2}$.
\end{itemize}

\subsection{The constructions}

We use the Markov chain $(Y_n)_{n\in\nn}$ of \S~\ref{subsection:mcmodel}, a choice of probability measure $\gamma$ of the form $(K)$, $(C)$ or $(P)$ as described in \S~\ref{subsection:rm}, and Thomsen's algorithm \S\ref{subsection:bg}, to build a simple inductive limit
\[
Z(\Pi,\pi,\gamma) =  \varinjlim(Z_{x_n,y_n},\varphi_n)
\]
whose tracial state space is affinely homeomorphic to $\varprojlim(\Delta_{Y_n},\psi_{n})$. Here, $\Delta_{-1}:=\Delta_0$ and the connecting map $\psi_{n}\colon\Delta_{Y_n}\to\Delta_{Y_{n-1}}$ is either:
\begin{enumerate}[(i)]
\item the standard inclusion (that is, assigns the vertices of $\Delta_{Y_n}$ to the first $Y_n+1$ vertices of $\Delta_{Y_{n-1}}$) if $Y_{n}=Y_{n-1}-1$; or
\item chosen randomly according to the probability measure $\gamma$ if $Y_{n}=Y_{n-1}+1$.
\end{enumerate}

\subsection{Probabilities}

\begin{theorem} \label{thm:js}
\begin{enumerate}[(1)]
\item \label{item:jsa} If $q_0=0$, then with probability $1$, $Z(\Pi,\pi,\gamma)$: 
\begin{enumerate}[(a)]
\item \label{item:jsa1} is monotracial (hence, is isomorphic to the Jiang--Su algebra $\js$) if $\sum\limits_{n=1}^\infty\prod\limits_{i=1}^{n}\frac{q_i}{p_i}=\infty$, in particular if $p_i=p\le q=q_i$ for all $i\ge 1$;
\item \label{item:jsa2} has infinite-dimensional tracial state space if $\sum\limits_{n=1}^\infty\prod\limits_{i=1}^{n}\frac{q_i}{p_i}<\infty$, in particular if $p_i=p > q=q_i$ for all $i\ge 1$; moreover, in this case $T(Z(\Pi,\pi,\gamma))$ is with probability $1$ affinely homeomorphic to either
\begin{itemize}
\item[$K$] the Bauer simplex whose extremal boundary is $\{\frac{1}{n}\}_{n=1}^\infty\cup\{0\}$ or
\item[$C$] the Bauer simplex whose extremal boundary is the Cantor set or
\item[$P$] the Poulsen simplex.
\end{itemize}
\end{enumerate}
\item \label{item:jsb} If $q_0>0$, then the probability that $Z(\Pi,\pi,\gamma)$ is isomorphic to $\js$ is $1$ if $\sum\limits_{n=0}^\infty \prod\limits_{j=0}^{n}\frac{q_j}{p_j}=\infty$, in particular if $p_i=p\le q=q_i$ for all $i\ge 1$, and otherwise is
\[
\left(\sum\limits_{i=0}^\infty\pi_i\sum\limits_{n=i}^\infty \prod\limits_{j=0}^{n}\frac{q_j}{p_j}\right)\bigg/\left(1+\sum\limits_{n=0}^\infty \prod\limits_{j=0}^{n}\frac{q_j}{p_j}\right),
\]
which simplifies to
\[
\sum\limits_{i=0}^\infty\pi_i\left(\frac{q}{p}\right)^{i+1}
\]
if $p_i=p> q=q_i$ for all $i\ge 0$.
\end{enumerate}
\end{theorem}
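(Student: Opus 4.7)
The plan is to convert the asymptotic behaviour of the Markov chain $(Y_n)$ into structural facts about the inverse limit $\varprojlim(\Delta_{Y_n},\psi_n)$ that realises $T(Z(\Pi,\pi,\gamma))$, and then to appeal to the Jiang--Su classification theorem \cite[Theorem 6.2]{Jiang:1999hb} in the monotracial cases or to a characterisation of the limit simplex via its extreme boundary in the remaining case.

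The monotracial assertions (\ref{item:jsa1}) and (\ref{item:jsb}) follow from a bottleneck lemma: since $\Delta_0=\Delta_{-1}$ is a single point and every $\psi_n$ is surjective, each visit of $(Y_n)$ to $0$ (or absorption at $-1$) uniquely fixes a coordinate of every compatible sequence $(x_m)\in\varprojlim(\Delta_{Y_n},\psi_n)$. Using $x_{m-1}=\psi_m(x_m)$, this determines the entire sequence to the left; and if $0$ is revisited at a later time $n_1>n_0$, the intermediate coordinates $x_{n_0+1},\dots,x_{n_1-1}$ are pinned down by iterated application of $\psi_{n_1},\dots,\psi_{n_0+2}$, with compatibility at $n_0$ automatic because the image lands in $\Delta_0$. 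Iterating, the inverse limit is a singleton almost surely on the event that $0$ is visited infinitely often or that absorption occurs; combined with simplicity of the limit algebra, \cite[Theorem 6.2]{Jiang:1999hb} then yields $Z(\Pi,\pi,\gamma)\cong\mathcal{Z}$. The stated probabilities match Theorem~\ref{thm:forever} in Case (\ref{item:forever}) and Proposition~\ref{prop:infinitebar} summed against $\pi$ in Case (\ref{item:retire}), with the closed-form simplifications read off from Example~\ref{ex:drunk}, exactly as in the proof of Theorem~\ref{thm:uhf}.

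For (\ref{item:jsa2}), Theorem~\ref{thm:forever} gives $Y_n\to\infty$ almost surely, so state $0$ is visited only finitely often and the limit simplex is infinite-dimensional (the bottleneck argument then fixes only finitely many coordinates, leaving genuinely many compatible continuations). To identify which simplex arises, I would extract the subsequence of times at which $(Y_n)$ attains a new maximum; by transience, the intervening downward excursions are almost surely finite and correspond to restrictions to faces that, by the asymptotic invariance \cite[Theorem 4.7]{Lindenstrauss:1978vi}, do not alter the simplex represented. The effective representing matrix is then a row-sampling from $\gamma$ along this monotone sub-trajectory. For $\gamma=K$ the matrix is deterministic with equal entries $1/n$, so \cite[Theorem 4.3]{Sternfeld:1980vo} identifies the limit with the Bauer simplex $\{1/n\}\cup\{0\}$. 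For $\gamma=C$ each sampled row is a single vertex, forcing the limit to be Bauer; a Borel--Cantelli argument using the independence of the vertex choices should show the extreme boundary is compact, metrisable, perfect and totally disconnected, hence Cantor. For $\gamma=P$ the staggered supports $\conv\{e_{0,n},\dots,e_{n-i+1,n}\}$ exhaust the faces of every $\Delta_n$ as $i$ varies, so almost surely the sampled rows are dense in every face; this translates into density of the extreme boundary of the limit, characterising it as the Poulsen simplex via \cite{Lindenstrauss:1978vi}.

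The main obstacle is verifying the almost-sure boundary properties in cases $(C)$ and $(P)$, where the zig-zag of $(Y_n)$ interacts non-trivially with the product measure $\gamma$. For $(C)$, ruling out isolated vertices in the limit requires controlling the rate at which new vertices are added at each level against the geometric transience tail governing how quickly $(Y_n)$ escapes past them; a Borel--Cantelli estimate exploiting independence of the $\gamma_n$ should settle this. For $(P)$, the density argument needs the coverage of all faces by the shifted supports of $\gamma_{s_n+i}$ to be coupled with the almost-sure infinite pre-escape revisits to each finite level, so that sufficiently many sampled rows actually enter the limit on each face; isolating the monotone upward subsequence first and showing that the downward excursions act benignly (via \cite[Theorem 4.7]{Lindenstrauss:1978vi}) should streamline both arguments.
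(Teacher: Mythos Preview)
Your treatment of the monotracial assertions (\ref{item:jsa1}) and (\ref{item:jsb}) is correct and matches the paper's: the paper phrases your bottleneck lemma as ``an inverse limit is isomorphic to the limit over any cofinal subset of the index set,'' so that monotraciality is equivalent to infinitely many visits to $\{0,-1\}$, and then reads off the probabilities from Theorem~\ref{thm:forever}, Proposition~\ref{prop:infinitebar} and Example~\ref{ex:drunk} exactly as you do. Case~$K$ is likewise the same: the representing matrix is deterministic and the identification is by the cited results on representing matrices.

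The divergence is in how you propose to handle the zig-zag of $(Y_n)$ in cases $C$ and $P$. Invoking \cite[Theorem~4.7]{Lindenstrauss:1978vi} is not the right tool: that theorem compares two representing matrices whose row-differences are $\ell_1$-summable, whereas here you are not perturbing entries but altering the \emph{shape} of the inverse system. The correct reduction is the one you already used for the monotracial case---cofinality of inverse limits---applied to the subsequence of last visits to each level; there is no need for an asymptotic-invariance detour, and your worry that the excursions ``interact non-trivially with $\gamma$'' largely dissolves once you make that reduction.

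For the almost-sure identification of the limit simplex, your outline stops short of an argument, and this is where the paper is more concrete. In case~$C$, the paper observes (via \cite[Theorem~5.1]{Lazar:1971kx}) that the limit is automatically Bauer with totally disconnected boundary, reinterprets the vertex-sampling as a population tree in which at each level one individual is chosen uniformly to reproduce, and then bounds the probability that some branch eventually dies out by
\[
\sum_{n\ge 2} n\cdot\prod_{i\ge n}\frac{i-1}{i}=\sum_{n\ge 2} n\cdot\lim_{m\to\infty}\frac{n-1}{m}=0,
\]
so Brouwer's characterisation forces the Cantor boundary. In case~$P$, the paper uses the criterion (from the remark after \cite[Theorem~5.6]{Lazar:1971kx}) that the Poulsen simplex arises precisely when the rows of the representing matrix are dense in the positive face of the $\ell_1$-sphere; failure of density at some fixed $\ell_1$-ball of $\gamma$-measure $\varepsilon>0$ has probability at most $\prod_{m\ge n}(1-\varepsilon)=0$. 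Both computations are one-liners once the reduction is made; your Borel--Cantelli and face-coverage sketches are pointed in the right direction but would benefit from being replaced by these direct estimates.
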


\begin{proof}
(\ref{item:jsa}) First, since an inverse limit is isomorphic to the limit over any cofinal subset of the index set, $Z(\Pi,\pi,\gamma)$ is monotracial precisely when the walk $(Y_n)$ visits the states $\{0,-1\}$ infinitely often. This demonstrates (\ref{item:jsa1}) and the first assertion of (\ref{item:jsa2}).

Case $K$ is not just almost sure, but is the uniquely determined possibility (see (\ref{eqn:k0})).

Case $C$ follows from Brouwer's Theorem \cite[Theorem 3]{Brouwer:1910wa}, which characterises the Cantor set up to homeomorphism as the unique nonempty totally disconnected compact metrisable space without isolated points. By \cite[Theorem 5.1]{Lazar:1971kx}, $T(Z(\Pi,\pi,\gamma))$ is a Bauer simplex with totally disconnected extremal boundary. Essentially, the dual bases $(f_{j,n})_{0\le j\le n-1}$ of (\ref{eqn:dual}) correspond to partitions of unity over a decreasing sequence of clopen covers of the boundary, with the representing matrix describing the way that successive partitions sit inside each other (at each stage, the subsequent partition is obtained from the current one by splitting one element into two pieces). We might picture the random process as a population tree, where at each level, one member of the population is chosen at random to reproduce. In this language, the probability that $\partial_eT(Z(\Pi,\pi,\gamma))$ has an isolated point (or equivalently, is not homeomorphic to the Cantor set) is the probability that some branch of the tree eventually becomes an evolutionary dead end, which is at most
\[
\sum_{n=2}^\infty\left( n\cdot\prod_{i=n}^{\infty}\frac{i-1}{i} \right) = \sum_{n=2}^\infty n\cdot\lim_{m\to\infty} \frac{n-1}{m} = 0. 
\]

Case $P$ similarly follows from (the remark after) \cite[Theorem 5.6]{Lazar:1971kx}. A representing matrix yields the Poulsen simplex if the vectors $(a_{1n},\dots,a_{nn},0,\dots)$ are dense in the positive face of the unit sphere of $\ell_1$. This fails to happen only if there exist $\varepsilon>0$, $n\ge 1$ and some barycentric coordinates $(x_0,\dots,x_n)$ such that each choice of random point in $\conv\{e_{0,m},\dots,e_{n,m}\}$, $m\ge n$, misses the $\ell_1$-ball $B$ centred at $\sum\limits_{i=0}^nx_ie_{i,m}$ with $\gamma(B)=\varepsilon$. The probability of this is at most $\prod_{m\ge n}(1-\varepsilon)$, which is $0$.

Finally, (\ref{item:jsb}) follows from Example~\ref{ex:drunk}.
\end{proof}

\begin{remark} \label{remark:simplex}~
\begin{enumerate}
\item To allow for the possibility of a nontrivial finite-dimensional trace space, we might in the case $q_0>0$ take the dimension of the randomly constructed simplex to be $\sup\{Y(n) \mid n\in\nn\}$. As in Example~\ref{ex:drunk}, supposing that $p_i=p=q_i$ for all $i$, the probability that $Z(\Pi,\pi,\gamma)$ has at most $k\ge1$ extremal traces is then
\[
\sum\limits_{i=0}^k\pi_i\frac{k-i}{k+1}.
\]
\item We could equally well have opted for the stably projectionless versions of dimension drop algebras, so-called `Razak blocks', simple inductive limits of which (assuming `continuous scale') are completely classified by the tracial state space \cite{Razak:2002kq,Tsang:2005fj}. In this case, the almost-sure monotracial limit obtained in Theorem~\ref{thm:js} would be the $\cs$-algebra $\mathcal{W}$ instead of $\js$.
\end{enumerate}
\end{remark}

\section{Villadsen algebras of the first type} \label{section:ah}

By once again making a random choice at each step of an inductive limit, we this time build an approximately homogeneous (AH) algebra, the radius of comparison of which is a random variable whose distribution is the subject of this section.

\subsection{Background}

The radius of comparison is a numerical invariant of the Cuntz semigroup that was used in \cite{Toms:2008hl} to distinguish $\cs$-algebras with the same Elliott invariant. In \cite[Theorem 5.1]{Toms:2006up}, it is shown how to construct simple, unital AH algebras with arbitrary `dimension-rank ratio' $c$. Moreover, as pointed out in \cite[Corollary 5.2]{Asadi:2021th}, an algebra constructed in this way has radius of comparison $r=\frac{c}{2}$.

These algebras of Toms are, in the language of \cite{Toms:2009fk}, \emph{Villadsen algebras of the first type}. That is, they are inductive limits $A_X=\varinjlim(M_{m_i}\otimes C(X^{\times n_i}), \varphi_i)$, with a fixed compact Hausdorff $X$ (the `seed space') and each connecting map $\varphi_i$ of the form $\varphi_i(f)=\diag(f\circ\xi_{i,1},\dots,f\circ\xi_{i,m_{i+1}/m_i})$, where each $\xi_{i,j}\colon X^{\times n_{i+1}} \to X^{\times n_i}$ is either constant or a coordinate projection. Following  \cite{Toms:2006up}, we will take $X$ to be the sphere $S^2$, but as in \cite[\S8]{Toms:2009fk}, any finite (but nonzero) dimensional CW complex would do.

\subsection{The construction} \label{subsection:drunk}

This time, we take a random walk not on the natural numbers but on an infinite complete binary tree (or rather, on the countable disjoint union of copies of this tree). The initial position $W_0$ of the walk is specified by a probability distribution $\pi$ on $I=\{0\}\cup\{2^k\mid k\in\zz\}$. Once the starting point is decided, the walk constructs a random binary number, that is, determines the value of a random variable
\begin{equation} \label{eqn:binary}
W = 0.W_1W_2 \dots = \sum_{i=1}^\infty \frac{W_i}{2^i},
\end{equation}
where $\{W_i\}_{i\in\nn}$ are independent, as follows. After $i$ steps, our position is on the $i$th level of the tree. We descend to the level below by taking either the left branch (with probability $p_{i+1}$), in which case $W_{i+1}=0$, or the right branch (with probability $q_{i+1}=1-p_{i+1}$), in which case $W_{i+1}=1$. Just as in (\ref{eqn:tm}), the transition probabilities can be encoded by a matrix $\Pi$.

By following the procedure outlined in the proof of \cite[Theorem 5.1]{Toms:2006up}, we use the random variables $\{W_i\}_{i\in\nn}$ to construct a Villadsen algebra of the first type
\[
B(\Pi,\pi) = \varinjlim(B_i,\varphi_i).
\]
Here, $B_i=M_{n_i}(C(T_i))$, where $T_i=(S^2)^{m_0m_1\dots m_i}$, and the connecting map $\varphi_i\colon B_i\to B_{i+1}$ is of the form
\[
\varphi_i(f)(t) = \diag\left(f\circ\pi_i^1(t),\dots,f\circ\pi_i^{m_{i+1}}(t),f\left(t_i^1\right),\dots,f\left(t_i^{s_{i+1}}\right)\right),
\]
where $\pi_i^j\colon T_{i+1}=T_i^{m_{i+1}}\to T_i$, $1\le j\le m_{i+1}$, are the coordinate projections, and the points $t_i^j$ are chosen to make the limit simple (as in \cite{Villadsen:1998ys}). There is some flexibility in the choice of the natural numbers $m_i$ and $s_i$ (which determine $n_i=(m_i+s_i)n_{i-1}$). The ratios $\frac{m_i}{m_i+s_i}$ are what govern the radius of comparison $R$ of $B(\Pi,\pi)$, and these ratios are dictated by the random walk. We set $\frac{m_0}{n_0}=W_0$, and $\frac{m_i}{m_i+s_i}=2^{-\frac{W_i}{2^i}}$ for $i\ge1$. In other words, we relate
\[
R = \lim_{i\to\infty}\frac{\dim T_{i}}{2n_{i}} = \lim_{i\to\infty}\frac{m_0m_1\dots m_{i}}{n_{i}} = \lim_{i\to\infty}\frac{m_0m_1\dots m_{i-1}}{n_{i-1}} \frac{m_{i}}{m_{i}+s_{i}} = \prod_{i\in\nn}\frac{m_i}{m_i+s_i}
\]
to $\{W_i\}_{i\in\nn}$ by
\begin{equation} \label{eqn:roc}
R = W_0\cdot2^{-W}.
\end{equation}

By the Jessen--Wintner law of pure types (see \cite[Theorem 35]{Jessen:1935ux}, or \cite[Theorem 3.26]{Breiman:1992vp} for an elementary treatment), the distribution of $W$ is either:
\begin{enumerate}[(a)]
\item \label{item:d} discrete (that is, there is a countable set $J$ such that $\pp(W\in J) =1$); or
\item \label{item:s} singular (that is, $\pp(W=w)=0$ for every $w\in\rr$, but there is a Borel set $B$ of Lebesgue measure zero such that $\pp(W\in B) =1$); or
\item \label{item:a} absolutely continuous with respect to Lebesgue (so has a density $f$).
\end{enumerate}
(Distributions can in general be of mixed type, but this is not the case for a convergent series of discrete random variables.)

For the sake of computing the expected value of $R$, we will assume that we are in case (\ref{item:a}). By the results of \cite{Marsaglia:1971uq} (see also \cite{Chatterji:1964vn}), this occurs precisely when there is $\beta\in\rr$ such that, for every $n\ge1$,
\[
p_n = \frac{1}{1+e^{\beta/2^n}} \quad\text{and}\quad q_n = \frac{e^{\beta/2^n}}{1+e^{\beta/2^n}}.
\]
Then, the probability density function of $W$ is
\begin{equation} \label{eqn:pdf}
f_\beta(x) = \frac{\beta e^{\beta x}}{e^\beta-1} \quad,\quad 0\le x\le 1
\end{equation}
if $\beta\ne0$, or $f_0=1$, that is, representing the uniform distribution, if $\beta=0$.  

\subsection{Probabilities}

Within the framework of \S~\ref{subsection:drunk}, we can compute the expected value of the radius of comparison of the random $\cs$-algebra $B(\Pi,\pi)$.

\begin{theorem} \label{thm:roc}
\begin{enumerate}
\item \label{item:roca} With probability $\pi_0$, $B(\Pi,\pi)$ is $\js$-stable (in fact, a UHF algebra).
\item \label{item:rocb} For every $r>0$,
\[
\pp(R\ge r) = \sum_{k\in \zz}\pi_{2^k}G_\beta\left(\frac{r}{2^k}\right),
\]
where
\[
G_\beta(x) = \begin{cases}
 1 & \:\text{ if }\: x\le\frac{1}{2}\\
 \frac{x^{-\beta/\ln2}-1}{e^\beta-1} & \:\text{ if }\: \frac{1}{2}<x<1\\
 0 & \:\text{ if }\: x\ge1
 \end{cases}
\]
if $\beta\ne0$, and
\[
G_0(x) = \begin{cases}
 1 & \:\text{ if }\: x\le\frac{1}{2}\\
 -\frac{\ln x}{\ln2} & \:\text{ if }\: \frac{1}{2}<x<1\\
 0 & \:\text{ if }\: x\ge1.
 \end{cases}
\]
\item \label{item:rocc} The expected value of $R$ satisfies
\[
\frac{\ee(R)}{\sum\limits_{k\in \zz}2^k\pi_{2^k}} = \begin{cases}
 \frac{1}{\ln4} & \:\text{ if }\: \beta=0\\
 \ln2 & \:\text{ if }\: \beta=\ln2\\
 \frac{\beta(e^\beta-2)}{2(e^\beta-1)(\beta-\ln2)} & \:\text{ if }\: \beta\notin\{0,\ln2\}
 \end{cases}
\]
if $\sum\limits_{k\in \nn}2^k\pi_{2^k}$ is finite. Otherwise, $\ee(R)=\infty$.
\end{enumerate}
\end{theorem}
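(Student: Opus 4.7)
The plan is to reduce all three assertions to properties of the random variables $W_0$ (distributed according to $\pi$) and $W = \sum_{i \ge 1} W_i/2^i$ (with density $f_\beta$ on $[0,1]$ given in (\ref{eqn:pdf})), linked by the identity $R = W_0 \cdot 2^{-W}$ from (\ref{eqn:roc}). Since $W_0$ takes countably many values, the natural move is to condition on $\{W_0 = 2^k\}$ and reassemble by summing against $\pi_{2^k}$; $W_0$ and $W$ are independent by construction.

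For part (\ref{item:roca}), the outcome $W_0 = 0$ forces $m_0 = 0$ in the construction of \S\ref{subsection:drunk}, so the seed spaces $T_i = (S^2)^{m_0 m_1 \cdots m_i}$ all collapse to one-point spaces; each $B_i$ is then a matrix algebra and the inductive limit is a UHF algebra (in particular $\js$-stable). This event has probability exactly $\pi_0$.

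For part (\ref{item:rocb}), conditional on $W_0 = 2^k$ the event $\{R \ge r\}$ is $\{W \le \log_2(2^k/r)\}$. Writing $x = r/2^k$, the three regimes $x \le \tfrac{1}{2}$, $\tfrac{1}{2} < x < 1$, and $x \ge 1$ produce conditional probabilities $1$, $F_\beta(-\log_2 x)$, and $0$, where $F_\beta$ denotes the CDF of $W$. Integrating $f_\beta$ gives $F_\beta(y) = (e^{\beta y}-1)/(e^\beta - 1)$ for $\beta \ne 0$ and $F_0(y) = y$; the identity $e^{-\beta \log_2 x} = x^{-\beta/\ln 2}$ then produces precisely $G_\beta(x)$, and summing over $k$ against $\pi_{2^k}$ finishes this part.

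For part (\ref{item:rocc}), I use $\ee(R) = \int_0^\infty \pp(R \ge r)\,dr$ and Tonelli to swap sum and integral, obtaining $\ee(R) = \bigl(\sum_{k\in\zz} 2^k \pi_{2^k}\bigr) C_\beta$ after the substitution $r = 2^k x$, where $C_\beta := \int_0^\infty G_\beta(x)\,dx = \tfrac{1}{2} + \int_{1/2}^1 G_\beta(x)\,dx$. The case $\beta = 0$ reduces to $\int_{1/2}^1 -\log_2 x\,dx$ and returns $C_0 = 1/\ln 4$; the case $\beta = \ln 2$ uses $\int_{1/2}^1 x^{-1}\,dx = \ln 2$, and the constant pieces cancel to leave $C_{\ln 2} = \ln 2$. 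For $\beta \notin \{0,\ln 2\}$, evaluating $\int_{1/2}^1 x^{-\beta/\ln 2}\,dx$ and combining with the $-1/2$ and the prefactor $(e^\beta -1)^{-1}$ is the main nuisance I anticipate; patient combining of fractions collapses the expression to $\beta(e^\beta-2)/[2(e^\beta-1)(\beta-\ln 2)]$. Finally, since $C_\beta > 0$ and $\sum_{k < 0} 2^k \pi_{2^k} \le \sum_{k<0} 2^k = 1$, divergence of $\sum_{k\in\nn} 2^k \pi_{2^k}$ is equivalent to $\ee(R) = \infty$.
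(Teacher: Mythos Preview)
Your argument for parts (\ref{item:rocb}) and (\ref{item:rocc}) is correct and matches the paper's approach essentially line for line (the paper phrases the interchange of sum and integral as monotone convergence rather than Tonelli, which amounts to the same thing here).

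For part (\ref{item:roca}), however, you have only established one implication: that $W_0=0$ forces each $T_i$ to be a point, so that $B(\Pi,\pi)$ is UHF and hence $\js$-stable. This shows that the probability of $\js$-stability is \emph{at least} $\pi_0$. To conclude it is \emph{exactly} $\pi_0$, you must also rule out $\js$-stability when $W_0\ne 0$. The paper does this by invoking \cite[Corollary~4.6]{Rordam:2004kq}: $\js$-stability implies strict comparison, i.e.\ $R=0$; but if $W_0=2^k$ for some $k\in\zz$, then $R=W_0\cdot 2^{-W}\ge 2^{k-1}>0$. Without this step your claim that ``this event has probability exactly $\pi_0$'' is unjustified.
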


\begin{proof}
(\ref{item:roca}) follows from the fact that $\js$-stability implies strict comparison of positive elements \cite[Corollary 4.6]{Rordam:2004kq}, or in other words radius of comparison $R=0$. With the present construction, this is only possible with initial value $W_0=0$, in which case each $X_i$ is a point, so $B(\Pi,\pi)$ is a UHF algebra (which is $\js$-stable).

(\ref{item:rocb}) can be deduced using the probability density function $f_\beta$ of (\ref{eqn:pdf}) and independence of $W_0$ and $W$. From (\ref{eqn:pdf}), we compute the (complementary) cumulative distribution function of $2^{-W}$: for $\beta\ne0$ and $\frac{1}{2}\le x\le 1$ we have  
\[
\pp(2^{-W} \ge x) = \pp\left(W\le\log_2\frac{1}{x}\right)
= \int_0^{\log_2\frac{1}{x}} \frac{\beta e^{\beta t}}{e^\beta-1}dt
= \frac{x^{-\beta/\ln2}-1}{e^\beta-1}
= G_\beta(x).
\]
If $\beta=0$, this probability is
\[
\pp(2^{-W} \ge x) = \int_0^{\log_2\frac{1}{x}} 1\:dt
= \log_2\frac{1}{x}
= -\frac{\ln x}{\ln2}
= G_0(x).
\]
Then, for $r>0$ we have
\begin{align*}
\pp(R\ge r) &= \pp(W_0\cdot 2^{-W} \ge r)\\
&= \sum_{k\in\zz}\pp(W_0=2^k, W_0\cdot 2^{-W} \ge r)\\
&= \sum_{k\in\zz}\pi_{2^k} \pp\left(2^{-W} \ge \frac{r}{2^k}\right)\\
&= \sum_{k\in \zz}\pi_{2^k}G_\beta\left(\frac{r}{2^k}\right).
\end{align*}
(\ref{item:rocc}) now follows from (\ref{item:rocb}), thanks to the well-known formula
\[
\ee(R) = \int_0^\infty \pp(R\ge r) dr
\]
(see for example \cite[V.6, Lemma 1]{Feller:1971tc}). Using (\ref{item:rocb}) and the monotone convergence theorem, we can compute this as
\[
\ee(R) = \sum_{k\in \zz}\pi_{2^k}\int_0^\infty G_\beta\left(\frac{r}{2^k}\right) dr.
\]
If $\beta=0$, this is
\begin{align*}
\ee(R) &= \sum_{k\in \zz}\pi_{2^k}\left(2^{k-1}+\int_{2^{k-1}}^{2^k} -\frac{\ln r/2^k}{\ln2} dr\right)\\
&= \sum_{k\in \zz}2^k\pi_{2^k}\left(\frac{1}{2}+\int_{\frac{1}{2}}^{1} -\frac{\ln t}{\ln2} dt\right)\\
&= \sum_{k\in \zz}2^k\pi_{2^k}\frac{1}{\ln4}.
\end{align*}
If $\beta=\ln2$, we have
\begin{align*}
\ee(R) &= \sum_{k\in \zz}\pi_{2^k}\left(2^{k-1}+\int_{2^{k-1}}^{2^k}\left(\frac{2^k}{r}-1\right)dr\right)\\
&= \sum_{k\in \zz}2^k\pi_{2^k}\left(\frac{1}{2}+\int_{\frac{1}{2}}^{1}\left(\frac{1}{t}-1\right)dt\right)\\
&= \sum_{k\in \zz}2^k\pi_{2^k}\ln2.
\end{align*}
Finally, if $\beta\notin\{0,\ln2\}$, we have
\begin{align*}
\ee(R) &= \sum_{k\in \zz}\pi_{2^k}\left(2^{k-1}+\int_{2^{k-1}}^{2^k}\left(\frac{(r/2^k)^{-\beta/\ln2}-1}{e^\beta-1}\right)dr\right)\\
&= \sum_{k\in \zz}2^k\pi_{2^k}\left(\frac{1}{2}+\int_{\frac{1}{2}}^{1}\left(\frac{t^{-\beta/\ln2}-1}{e^\beta-1}\right)dt\right)\\
&= \sum_{k\in \zz}2^k\pi_{2^k}\frac{\beta(e^\beta-2)}{2(e^\beta-1)(\beta-\ln2)}. \qedhere
\end{align*} 
\end{proof}

\subsection{Variations}

In an alternative scenario, we proceed as follows. The initial value $\frac{m_0}{n_0}$ is once again specified by a probability distribution $\pi$, let us say on the nonzero natural numbers (although this is immaterial for the ensuing discussion). The construction is essentially the same as in \S~\ref{subsection:drunk}. The connecting maps $\varphi_i\colon B_i\to B_{i+1}$ are of the form
\[
\varphi_i(f)(t) = \diag\left(f\circ\pi_i^1(t),\dots,f\circ\pi_i^{m_{i+1}}(t),f\left(t_i^1\right),\dots,f\left(t_i^{i+1}\right)\right),
\]
the point evaluations chosen to ensure simplicity of the limit. But this time, there are two possibilities:
\begin{enumerate}[(a)]
\item with probability $p_i$, we make a tame choice, that is, we set $m_{i}=1$, so that $\frac{m_{i}}{m_{i}+i} = \frac{1}{i+1}$; or
\item with probability $q_i=1-p_i$, we make a more exotic choice, that is, we set $m_{i}=i(2^i-1)$, so that $\frac{m_{i}}{m_{i}+i} = 1-\frac{1}{2^i}$.
\end{enumerate}
The random inductive limit constructed this way is $\js$-stable if and only if we make the tame choice infinitely often, which occurs with probability
\[
1-\sum_{j=0}^\infty p_{j}\prod_{i\ge j+1} q_i
\]
(where $p_0=1$). This means that, if for example $q_i=\frac{1}{2}$ for every $i$, then we almost surely construct a $\js$-stable $\cs$-algebra.

Suppose on the other hand that, while $q_1=\frac{1}{2}$, subsequently $q_i=1-\frac{1}{i^2}$ for $i\ge2$ (say). Then, using the fact that
\[
\prod_{i\ge m} \left(1-\frac{1}{i^2}\right) = \lim_{k\to\infty}\prod_{i=m}^k\frac{i-1}{i}\cdot\frac{i+1}{i} = \lim_{k\to\infty}\frac{m-1}{m}\cdot\frac{k+1}{k} = \frac{m-1}{m},
\]
the probability of a $\js$-stable limit is
\[
1-\left(\frac{2-1}{2} + \sum_{j=2}^\infty\frac{1}{j^2}\cdot\frac{j}{j+1}\right) = 1-\left(\frac{1}{2}+\frac{1}{2}\right) = 0,
\]
so our random $\cs$-algebra is almost surely \emph{not} $\js$-stable.

\section{Graph $\cs$-algebras} \label{section:pi}

By repeatedly creating random pairings, we construct a random $r$-regular multigraph $\hat\g_{n,r}$ on $n$ labelled vertices, then generate the associated graph $\cs$-algebra $\cs(\hat\g_{n,r})$. In this section, we investigate the asymptotic distribution of the $K_0$-group of this algebra.

\subsection{Background}

A (countable) directed graph $E$ consists of a vertex set $E^0$, an edge set $E^1$, and range and source maps $r,s\colon E^1\to E^0$. A \emph{path} in $E$ is a (possibly finite) sequence of edges $(\alpha_i)_{i\ge1}$ such that $r(\alpha_i)=s(\alpha_{i+1})$ for every $i$, and a \emph{loop} is a finite path whose initial and final vertices coincide. A loop  $\alpha=(\alpha_1,\dots,\alpha_n)$ \emph{has an exit} if there exist $e\in E^1$ and $i\in\{1,\dots,n\}$ such that $s(e)=s(\alpha_i)$ but $e\ne \alpha_i$. A vertex $v\in E^0$ is called:
\begin{enumerate}[(a)]
\item a \emph{source} if $r^{-1}(v)=\emptyset$;
\item a \emph{sink} if $s^{-1}(v)=\emptyset$;
\item an \emph{infinite emitter} if $|s^{-1}(v)|=\infty$;
\item \emph{cofinal} if it can be connected to some point in any given infinite path in $E$.
\end{enumerate}

A \emph{Cuntz--Krieger $E$-family} associated to a finite directed graph $E=(E^0,E^1,s,r)$ is a set
\begin{equation} \label{eqn:gen}
\{p_v \mid v\in E^0\} \cup \{s_e \mid e\in E^1\},
\end{equation}
where the $p_v$ are mutually orthogonal projections and the $s_e$ are partial isometries satisfying:
\begin{align} \label{eqn:rel}
s_e^*s_f &= 0 &&\forall\: e,f\in E^1 \text{ with } e\ne f \nonumber\\
s_e^*s_e &= p_{r(e)} &&\forall\: e\in E^1 \nonumber\\
s_es_e^* &\le p_{s(e)} &&\forall\: e\in E^1 \nonumber\\
p_v &= \sum\limits_{e\in s^{-1}(v)} s_es_e^* &&\forall\: v\in E^0 \:\text{that is not a sink}.
\end{align}
(For countably infinite graphs, which we do not consider here, the last equation should hold for vertices $v$ that are neither sinks nor infinite emitters.)

The \emph{graph algebra} $\cs(E)$ is the universal $\cs$-algebra with generators (\ref{eqn:gen}) satisfying the relations (\ref{eqn:rel}). A \emph{Cuntz--Krieger algebra} may be defined as the graph $\cs$-algebra of a finite graph without sinks or sources. By \cite[\S4]{Kumjian:1997uq}, this is equivalent to the original definition \cite{Cuntz:1980hl}. In fact (see \cite[Theorem 3.12]{Arklint:2015tu}), a graph algebra $\cs(E)$ is isomorphic to a Cuntz--Krieger algebra if and only if $E$ is a finite graph with no sinks, or equivalently if $\cs(E)$ is unital and 
\[
\rank(K_0(\cs(E))) = \rank(K_1(\cs(E))).
\]
In general, if $E$ is a finite graph with no sinks, and $A_E$ is its adjacency matrix
\[
A_E(i,j) = \left|\left\{e\in E^1 \mid s(e)=i, r(e)=j\right\}\right|,
\]
then $K_*(\cs(E))$ is given by
\begin{equation} \label{eqn:kt}
K_0(\cs(E)) \cong \coker(A_E^t-I) \quad\text{and}\quad K_1(\cs(E)) \cong \ker(A_E^t-I).
\end{equation}
(See \cite[Theorem 3.2]{Raeburn:2004tg}, where this is proved for row-finite graphs possibly with sinks, and also \cite[Theorem 3.1]{Drinen:2002kx}, which considers arbitrary graphs.)

Finally, we recall conditions on $E$ that correspond to $\cs(E)$ being a Kirchberg algebra (see \cite[Propositions 5.1 and 5.3]{Bates:2000fk} and \cite[Corollary 3.10]{Kumjian:1998nr}).

\begin{proposition} \label{prop:pisun}
Let $E$ be a finite directed graph.
\begin{enumerate}[(1)]
\item \label{item:pi1} If every vertex connects to a loop and every loop has an exit, then $\cs(E)$ is purely infinite.
\item \label{item:pi2} If $E$ has no sinks, then $\cs(E)$ is simple if and only if every loop in $E$ has an exit and every vertex in $E$ is cofinal. In this case, $\cs(E)$ is an AF algebra if $E$ has no loops, and otherwise is purely infinite.
\end{enumerate}
\end{proposition}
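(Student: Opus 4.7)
The proposition collects standard structural results about graph $\cs$-algebras, so the plan is to outline how the machinery of \cite{Bates:2000fk,Kumjian:1998nr} assembles into the statement rather than to reproduce technical lemmas in full.

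For (1), the strategy is to exhibit an infinite projection inside each nonzero hereditary subalgebra of $\cs(E)$. Given a loop $\alpha=(\alpha_1,\dots,\alpha_n)$ based at $v\in E^0$ with an exit $e$ at position $i$, the product $s_\alpha:=s_{\alpha_1}\cdots s_{\alpha_n}$ satisfies $s_\alpha^*s_\alpha=p_v$, while $s_\alpha s_\alpha^*$ and $s_{\alpha_1\cdots\alpha_{i-1}e}s_{\alpha_1\cdots\alpha_{i-1}e}^*$ are nonzero, mutually orthogonal (since $s_e^*s_{\alpha_i}=0$) subprojections of $p_v$; hence $p_v$ is infinite. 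If $w\in E^0$ connects to $v$ through a path $\beta$, then $s_\beta s_\beta^*\le p_w$ is Murray--von Neumann equivalent to $p_v$, so $p_w$ dominates an infinite projection and is therefore itself infinite. Pure infiniteness of $\cs(E)$ then follows once one checks that every nonzero hereditary subalgebra contains some projection of the form $s_\gamma s_\gamma^*$---the usual density argument using the Cuntz--Krieger relations (\ref{eqn:rel}) together with the gauge-invariant AF core.

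For (2), I would address simplicity and the subsequent dichotomy separately. Sufficiency of condition (L) plus cofinality for simplicity is the Cuntz--Krieger Uniqueness Theorem as adapted to graphs in \cite{Bates:2000fk}: condition (L) forces any $^*$-homomorphism out of $\cs(E)$ that preserves each vertex projection to be injective, and cofinality ensures that every nonzero ideal contains every $p_v$. Necessity comes from exhibiting explicit obstructions: a loop without an exit sits inside a commutative $\cs$-subalgebra of $\cs(E)$ isomorphic to $C(\mathbb{T})$, whose nontrivial ideals lift, while a non-cofinal vertex yields a proper saturated hereditary subset of $E^0$ and hence a nontrivial gauge-invariant ideal through the standard classification \cite{Bates:2000fk}. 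For the final dichotomy: if $E$ contains a loop, then cofinality forces every vertex to connect to it and the hypothesis that every loop has an exit triggers (1), yielding pure infiniteness; the acyclic case is vacuous for finite sinkless graphs by pigeonhole, and the AF clause is to be read in the broader (sinkful) setting of \cite{Drinen:2002kx}, where acyclicity realises $\cs(E)$ as a finite direct sum of matrix algebras.

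The main obstacle in a fully self-contained treatment would be the Cuntz--Krieger Uniqueness Theorem itself, whose proof requires a careful analysis of the gauge action and the AF structure of its fixed-point algebra. The combinatorial parts---tracing loops, paths and exits through $E$ to produce the required projections and Murray--von Neumann equivalences---reduce to routine manipulations with the defining relations.
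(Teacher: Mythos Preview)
Your sketch is correct and follows the standard route through \cite{Bates:2000fk} and \cite{Kumjian:1998nr}. Note, however, that the paper does not actually prove this proposition: it is presented as a recollection of known facts, with the parenthetical ``see \cite[Propositions 5.1 and 5.3]{Bates:2000fk} and \cite[Corollary 3.10]{Kumjian:1998nr}'' standing in lieu of any argument. Your outline is essentially an expansion of what those references contain, so there is no discrepancy in approach---you have simply supplied what the paper defers to the literature. Two minor points: the AF clause for acyclic row-finite graphs is in \cite{Kumjian:1998nr} (and \cite{Bates:2000fk}), not \cite{Drinen:2002kx}, which is a $K$-theory computation; and, as you correctly observe, a finite sinkless graph must contain a loop, so the AF alternative in part~(\ref{item:pi2}) is vacuous under the stated hypotheses and is only meaningful in the broader row-finite setting from which the result is quoted.
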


\subsection{The construction}

For a given natural number $r$, a multigraph $E$ is said to be \emph{$r$-regular} if every vertex has degree $r$. There are several probability distributions that can be used to construct a random $r$-regular (undirected, multi-) graph on $n\in2\nn$ labelled vertices. We will use the \emph{perfect matchings model}
\[
\hat\g_{n,r}=\underbrace{\g_{n,1}+\dots+\g_{n,1}}_r,
\]
that is, the union of $r$ independent, uniformly random perfect matchings. One might also consider the \emph{uniform model} $\g'_{n,r}$, that is, a random element of the set of $r$-regular multigraphs with the uniform distribution, conditioned on there being no loops. By \cite[Theorem 11]{Janson:1995wq}, these two models are \emph{contiguous}, which means that a sequence of events that occurs \emph{asymptotically almost surely} (that is, with probability approaching $1$ as $n\to\infty$) according to one distribution, occurs asymptotically almost surely according to the other.

We convert the undirected graph $\hat\g_{n,r}$ into a directed one by replacing each edge (between given vertices $i,j$) by two (one from $i$ to $j$ and one from $j$ to $i$).

\begin{remark}
One can expect strong connectivity in random regular graphs. Conditioning $\g'_{n,r}$ on there being no multiple edges, one obtains (up to contiguity) the random $r$-regular graph $\g_{n,r}$. By work of Bollob\'as \cite{Bollobas:1980wo} (see \cite[Theorem 11.9]{Frieze:2016tw}), $\g_{n,r}$ is asymptotically almost surely (a.a.s.) $r$-connected. For our purposes, it will be enough for us to know that $\hat\g_{n,r}$ is a.a.s.\ ($1$-) connected, which is not difficult to show (see \cite{Lavrov:2022se}).
\end{remark}

\subsection{Probabilities}

\begin{theorem} \label{thm:ck}
For every even integer $n$ and every integer $r\ge3$, $\cs(\hat\g_{n,r})$ is isomorphic to a Cuntz-Krieger algebra that is purely infinite and is asymptotically almost surely simple. Moreover, for any finite set $P$ of odd primes not dividing $r-1$, and any finite abelian group $V$ such that $|V|$ is in the multiplicative semigroup generated by $P\subseteq\nn$,
\begin{equation} \label{eqn:kd}
\lim_{n\in2\nn}\pp\left(K_0\left(\cs(\hat\g_{n,r})\right)\otimes\prod_{p\in P}\zz_p\cong V\right) = N(V)\prod_{p\in P}\prod_{k\ge0}\left(1-p^{-2k-1}\right),
\end{equation}
where $\zz_p$ denotes the $p$-adic integers, and
\[
N(V) = \frac{|\{\text{symmetric, bilinear, perfect } \varphi\colon V\times V\to\cc^*\}|}{|V|\cdot|\Aut(V)|}.
\]
\end{theorem}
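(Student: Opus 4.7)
The plan is to verify the structural claims using Proposition~\ref{prop:pisun}, to reduce the $K$-theoretic assertion to understanding $\coker(A_E-I)$ via \eqref{eqn:kt}, and finally to appeal to a Cohen-Lenstra-style universality theorem for Sylow subgroups of cokernels of random symmetric integer matrices.

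For the structural claims: since $\hat\g_{n,r}$ is $r$-regular with $r\ge 3$ and the perfect matchings model produces no self-loops, doubling each undirected edge into a pair of opposing directed edges yields a finite directed graph in which every vertex has in- and out-degree equal to $r$. Hence there are no sinks, and $\cs(\hat\g_{n,r})$ is isomorphic to a Cuntz--Krieger algebra by the cited characterization. Each directed edge together with its reverse constitutes a loop of length $2$, so every vertex lies on a loop; and since each vertex has out-degree $r\ge 3$, every loop admits an exit. Proposition~\ref{prop:pisun}(\ref{item:pi1}) then yields pure infiniteness. The a.a.s.\ connectivity of $\hat\g_{n,r}$ recorded in the remark just before the theorem translates into a.a.s.\ strong connectivity of the doubled directed graph, making every vertex a.a.s.\ cofinal, so Proposition~\ref{prop:pisun}(\ref{item:pi2}) delivers a.a.s.\ simplicity.

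Because $A_E$ is symmetric, \eqref{eqn:kt} identifies $K_0(\cs(\hat\g_{n,r}))$ with $\coker(A_E-I)$, where $A_E$ is an integer symmetric matrix with zero diagonal and constant row sum $r$. The assumption that each $p\in P$ is odd is what makes the symmetric Cohen-Lenstra theory on $p$-groups clean (excluding the alternating/quadratic subtleties at $p=2$), while the assumption $p\nmid r-1$ ensures that the row sums of $A_E-I$ are units in $\zz_p$, ruling out deterministic $p$-torsion coming from the all-ones kernel vector and keeping $A_E-I$ full rank modulo $p$ outside of an exceptional event. Tensoring with $\prod_{p\in P}\zz_p$ isolates the $P$-primary torsion of $\coker(A_E-I)$, and the theorem reduces to a joint distribution statement for the Sylow $p$-subgroups as $p$ ranges over $P$.

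For the final step, I would invoke (or adapt to the matching model) a universality theorem of Cohen-Lenstra type for cokernels of random symmetric integer matrices: provided the residues modulo powers of $p$ satisfy suitable non-degeneracy and anti-concentration estimates, the Sylow $p$-subgroup of the cokernel converges in distribution to the symmetric Cohen-Lenstra measure $V\mapsto N(V)\prod_{k\ge 0}(1-p^{-2k-1})$, with asymptotic independence across a finite set of primes producing the product formula \eqref{eqn:kd}. The standard route is the moment method: compute, for each finite abelian $P$-group $V$, the limiting expected value of $|\sur(\coker(A_E-I),V)|$, and invert via M\"obius on the lattice of finite abelian $p$-groups to recover the pointwise probabilities. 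The symmetric count producing $N(V)$ arises precisely because $A_E-I$ is symmetric, so that homomorphisms from $\coker(A_E-I)$ to $V$ are governed by symmetric bilinear pairings on $V$. The hard part is supplying the input to this last step: $A_E$ is far from a generic symmetric matrix, being the sum of $r$ independent permutation matrices of fixed-point-free involutions, with highly dependent entries (nonnegative integers bounded by $r$, rows summing exactly to $r$). The core technical work is to show that modulo $p^k$ the matrix $A_E-I$ is close enough to Haar-uniform on symmetric matrices over $\zz/p^k$ to satisfy the universality hypotheses; I would estimate the moments $\ee[|\sur(\coker(A_E-I),V)|]$ directly in the matching model, using switching arguments and cycle statistics of uniform random matchings, to match the Cohen-Lenstra expected counts in the limit, from which the stated distribution (including a.a.s.\ vanishing of the free rank upon restriction to $P$-primary torsion) follows by standard inversion.
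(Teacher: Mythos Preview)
Your proposal is correct and follows essentially the same route as the paper: verify the structural claims via Proposition~\ref{prop:pisun} and a.a.s.\ connectivity, identify $K_0$ with $\coker(A_E-I)$ for the symmetric adjacency matrix, and then run the symmetric Cohen--Lenstra moment method. The paper makes this concrete by invoking Wood's moment-determines-distribution machinery (rather than M\"obius inversion) and by computing the limiting moment $\ee[|\sur(\coker(C_n-I),V)|]\to|\wedge^2V|$ exactly as in Nguyen--Wood's Theorem~1.5 (with $C_n$ replaced by $C_n-I$), using M\'esz\'aros's Theorem~1.6 and the $R^S(q,k)$ combinatorics in place of your proposed switching/cycle-statistics estimates.
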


Here, a symmetric, $\zz$-bilinear map $\varphi\colon V\times V\to\cc^*$ is \emph{perfect} if the only $v\in V$ with $\varphi(v,V)=1$ or $\varphi(V,v)=1$ is $v=0$. For a discussion of the significance of the normalising factor $|V|\cdot|\Aut(V)|$, see \cite[\S1]{Wood:2017tk}, where it is also noted that, if $V$ is a finite abelian $p$-group
\[
V=\bigoplus_{i=1}^M \zz/p^{\lambda_i}\zz
\]
with $\lambda_1\ge\lambda_2\ge\dots\ge\lambda_M$, then
\[
N(V) = p^{-\sum_i\frac{\mu_i(\mu_i+1)}{2}}\prod_{i=1}^{\lambda_1}\prod_{j=1}^{\lfloor\frac{\mu_i-\mu_{i+1}}{2}\rfloor}(1-p^{-2j})^{-1},
\]
where $\mu_i=|\{j\mid \lambda_j\ge i\}|$. Consequently, for large $p$,
\[
\lim_{n\in2\nn}\pp\left(K_0\left(\cs(\hat\g_{n,r})\right)\otimes \zz_p\cong \zz/p^N\zz\right) \approx p^{-N},
\]
while
\[
\lim_{n\in2\nn}\pp\left(K_0\left(\cs(\hat\g_{n,r})\right)\otimes \zz_p\cong (\zz/p\zz)^N\right) \approx p^{-\frac{N(N+1)}{2}}.
\]
As for the left hand side of (\ref{eqn:kd}) when $P=\{p\}$, note that $K_0\left(\cs(\hat\g_{n,r})\right)\otimes\zz_p\cong V$ precisely when $K_0\left(\cs(\hat\g_{n,r})\right)$ is a finite abelian group whose $p$-Sylow subgroup is isomorphic to $V$. This means that, compared to its counterpart $(\zz/p\zz)^N$, the cyclic group $\zz/p^N\zz$ is much more likely to appear in the $K_0$-group of a random graph algebra $\cs(\hat\g_{n,r})$.

\begin{proof}[Proof of Theorem~\ref{thm:ck}]
By construction, $\hat\g_{n,r}$ has neither sinks nor sources, and every vertex is the starting point of at least $r\ge3$ loops, so by Proposition~\ref{prop:pisun}(\ref{item:pi1}), $\cs(\hat\g_{n,r})$ is isomorphic to a purely infinite Cuntz-Krieger algebra. Since $\hat\g_{n,r}$ is a.a.s.\ connected (see \cite{Lavrov:2022se}), it follows from Proposition~\ref{prop:pisun}(\ref{item:pi2}) that $\cs(\hat\g_{n,r})$ is a.a.s.\ simple.

The second statement is proved in exactly the same way as \cite[Theorem 1.5]{Nguyen:2018vh}, but with the (symmetric) adjacency matrix $C_n$ replaced by $C_n-I$. The key point is that, by Wood's incredibly powerful limiting distribution/moment machinery \cite[Theorem 6.1, Theorem 8.3 and Corollary 9.2]{Wood:2017tk}, it suffices to show that the expected number of surjections from $\coker(C_n-I)$ to $V$ tends to the size $|\wedge^2V|$ of the exterior power of $V$ as $n\in2\nn$ tends to $\infty$. Here is a brief summary of the argument.

Write $I(V) = \Span_\zz\{v\otimes v \mid v\in V\}$, so that $\wedge^2V = (V\otimes V)/I(V)$. For $q=(q_1,\dots,q_n)\in V^n=\Homo(\zz^n,V)$, let $\mc$ be the minimal coset of $V$ containing $\{q_1,\dots,q_n\}$, that is,
\[
\mc = q_n+\Span_\zz\{q_i-q_n \mid 1\le i\le n-1\}.
\]
For $k\in\nn$, define
\[
R^S(q,k) = \left\{s\in(k\cdot\mc)^n \mid \sum_{i=1}^n q_i\otimes s_i\in I(V) \text{ and } \sum_{i=1}^n s_i = k\sum_{i=1}^n q_i \right\}.
\]
For every $q\in V^n$, we have
\[
q \in R^S(q,r) \iff 0 \in R^S(q,r-1)
\]
and, since $C_n$ is obtained from $r$ perfect matchings on $\{1,\dots,n\}$, it is not hard to see that $(C_n-I)q\in R^S(q,r-1)$. By \cite[Theorem 1.6]{Meszaros:2020wi} and the computations that appear in the proof of \cite[Theorem 1.5]{Nguyen:2018vh},
\begin{align*}
\lim_{n\in2\nn}\ee(|\sur(\coker(C_n-I),V)|) &= \lim_{n\in2\nn}\sum_{\substack{q\in\sur(\zz^n,V)\\q\in R^S(q,r)}} \pp(C_nq=q)\\
&= \lim_{n\in2\nn}\sum_{\substack{q\in\sur(\zz^n,V)\\q\in R^S(q,r)}} |R^S(q,r)|^{-1}\\
&= \lim_{n\in2\nn}\sum_{\substack{q\in\sur(\zz^n,V)\\0\in R^S(q,r-1)}} \frac{|\wedge^2V|}{|V|^{n-1}}\\
&= \lim_{n\in2\nn}(|V|^{n-1} + o(|V|^n))\frac{|\wedge^2V|}{|V|^{n-1}}\\
&= |\wedge^2V|. \qedhere
\end{align*}
\end{proof}

\end{document}